\documentclass[11pt]{article}

\usepackage[T1]{fontenc}
\usepackage[utf8]{inputenc}
\usepackage[a4paper,margin=1in]{geometry}
\usepackage{graphicx}
\usepackage{multirow}
\usepackage{amsmath,amssymb,amsfonts}
\usepackage{amsthm}
\usepackage[title]{appendix}
\usepackage{xcolor}
\usepackage{textcomp}
\usepackage{manyfoot}
\usepackage{booktabs}
\usepackage{algorithm}
\usepackage{algorithmicx}
\usepackage{algpseudocode}
\usepackage{listings}
\usepackage{braket}
\usepackage[colorlinks=true,linkcolor=blue,citecolor=blue,urlcolor=blue]{hyperref}

\theoremstyle{plain}
\newtheorem{theorem}{Theorem}
\newtheorem{proposition}[theorem]{Proposition}
\newtheorem{corollary}[theorem]{Corollary}
\newtheorem{lemma}[theorem]{Lemma}

\theoremstyle{definition}

\theoremstyle{remark}
\newtheorem{remark}[theorem]{Remark}

\newcommand{\Hh}{\mathcal{H}}
\DeclareMathOperator{\Tr}{Tr}
\newcommand{\rhodiag}{\rho_{\mathrm{diag}}}
\newcommand{\Wavg}{\mathcal{W}}
\newcommand{\R}{\mathbb{R}}
\DeclareMathOperator{\dist}{dist}
\newcommand{\C}{\mathbb{C}}
\newcommand{\Z}{\mathbb{Z}}
\newcommand{\Aeq}{A_{\mathrm{eq}}}

\newcommand{\safeincludegraphics}[2][]{%
  \IfFileExists{#2}{\includegraphics[#1]{#2}}{%
    \IfFileExists{#2.pdf}{\includegraphics[#1]{#2.pdf}}{%
      \IfFileExists{#2.png}{\includegraphics[#1]{#2.png}}{%
        \IfFileExists{#2.jpg}{\includegraphics[#1]{#2.jpg}}{%
          \IfFileExists{#2.jpeg}{\includegraphics[#1]{#2.jpeg}}{%
            \fbox{\parbox[c][0.25\textheight][c]{0.8\linewidth}{\centering Missing figure: \texttt{\detokenize{#2}}}}%
          }%
        }%
      }%
    }%
  }%
}

\title{Weighted Time Averages and Weak Convergence to Equilibrium in Quantum Integrable Systems}
\author{%
Xinyu Liu\textsuperscript{1} \and
Yong Li\textsuperscript{1,2}\thanks{Corresponding author. Email: \texttt{liyong@jlu.edu.cn}}}
\date{\today}

\begin{document}
\maketitle

\begin{center}
\textsuperscript{1}\,College of Mathematics, Jilin University, Changchun 130012, P. R. China\\[2pt]
\textsuperscript{2}\,Center for Mathematics and Interdisciplinary Sciences, Northeast Normal University, Changchun 130024, P. R. China
\end{center}

\begin{abstract}
This paper investigates the quantum counterpart of the weak convergence of statistical ensembles to equilibrium in classical integrable systems. In contrast to the classical case, one-time observables in quantum integrable systems with pure point spectrum typically exhibit quasiperiodic oscillations and therefore do not admit pointwise long-time limits. To overcome this difficulty, we introduce a weighted time-averaging approach and identify the diagonal (dephased) state as the natural equilibrium object in the quantum setting, thereby establishing a corresponding convergence theory. Moreover, for finite-frequency quasiperiodic signals, we derive explicit quantitative convergence estimates and reveal the acceleration mechanism of weighted averages relative to ordinary time averages. Finally, we construct an explicitly solvable quantum integrable model and perform numerical simulations that confirm the theoretical results and demonstrate the effectiveness of weighted time averages in approximating the equilibrium state.
\end{abstract}

\noindent\textbf{Keywords.} Quantum integrable system; statistical ensemble; weighted time average; quasi-periodic dynamics.

\section{Introduction}\label{sec1}

The long-time equilibration and thermalization of isolated quantum systems is one of the central problems in nonequilibrium statistical physics, quantum dynamics, and mathematical physics \cite{Polkovnikov2011RMP,Eisert2015NatPhys,GogolinEisert2016,DAlessio2016AdvPhys}. Over the past two decades, the rapid development of quantum simulation platforms---including ultracold atoms, low-dimensional quantum gases, and superconducting qubits---has made it possible to probe the nonequilibrium dynamics of closed many-body systems directly under highly isolated and strongly controllable experimental conditions, thereby greatly stimulating the corresponding theoretical advances \cite{Eisert2015NatPhys,Langen2015,Kaufman2016,Rauer2018}. For generic non-integrable systems, the long-time behavior is typically closely tied to quantum chaos and the eigenstate thermalization mechanism; a large body of work shows that, under suitable conditions, local observables relax to thermal equilibrium values predicted by equilibrium statistical mechanics \cite{Rigol2008Nature,DAlessio2016AdvPhys,GogolinEisert2016}.

The situation is fundamentally different for integrable systems. Due to the presence of extensively many conserved quantities, their long-time behavior is generally no longer described by the standard Gibbs ensemble, but is instead more naturally characterized by generalized Gibbs ensembles, diagonal ensembles, or, more broadly, dephasing mechanisms \cite{Rigol2007PRL,Cassidy2011,CauxEssler2013,Ilievski2015,Langen2015}. This suggests that, although a single pure state in an integrable quantum system still undergoes reversible unitary evolution, the long-time behavior of suitable observables may nevertheless exhibit robust statistical structure. On the classical side, Mitchell established in 2019 a weak convergence result to equilibrium for statistical ensembles in integrable Hamiltonian systems \cite{Mitchell2019}. Building on this perspective, the ensemble-theoretic study of integrable dynamics was subsequently extended to discrete-time settings, limit theorems, and non-autonomous transitions, including the law of large numbers and the central limit theorem for discrete integrable Hamiltonian systems, as well as long-time averaging and weak convergence for integrable Hamiltonian systems with almost periodic transitions \cite{LiuZhangLi2026,LiuLi2025}.

From a broader viewpoint, however, being ``close to equilibrium'' in an isolated quantum system does not necessarily mean that the system converges strongly to a stationary limiting state. More commonly, certain observables become stable only in a time-averaged or weak sense \cite{Reimann2008,Cramer2008,Barthel2008,GogolinEisert2016}. This point is especially important for quantum systems with pure point spectrum: because their time evolution is built from oscillatory phases associated with discrete energy gaps, single-time expectation values typically display quasi-periodic or almost periodic behavior, and one therefore should not expect pointwise convergence as \(t\to\infty\). Experimental observations of generalized equilibrium, thermalization, and nontrivial recurrence phenomena further indicate that long-time behavior often contains two simultaneous features, namely local stabilization and persistent oscillations or revivals \cite{Langen2015,Kaufman2016,Rauer2018}. Accordingly, in the pure-point setting, the genuinely natural asymptotic object is not the pointwise limit of a single-time signal, but rather an effective equilibrium state extracted through an appropriate time-averaging procedure.

Motivated by this viewpoint, in the quantum setting we seek a notion of weak equilibration that is compatible with spectral decomposition. For the pure-point systems considered in this work, the most natural candidate is precisely the diagonal (or dephased) state \(\rhodiag\) induced by the initial state through the energy decomposition. This state removes the oscillatory couplings between distinct energy subspaces while retaining the block structure inside degenerate eigenspaces, and therefore captures exactly the information that survives after long-time averaging.

Identifying the limiting object alone, however, is not sufficient. In the integrable setting, the truly meaningful questions concern not only what the equilibrium object is, but also by what mechanism and at what rate it is approached. In this regard, the recent series of works by Tong and Li \cite{TongLi2024,TongLi2025,TongLi2026} is of particular importance. By developing a systematic theory of weighted Birkhoff averages based on endpoint-flat weight functions, they have transformed time-averaging problems from questions of existence and limit identification into a framework that also permits refined quantitative analysis. The present work pushes this weighted averaging theory into the setting of quantum integrable systems. We introduce a class of compactly supported, \(C^\infty\) weight functions that decay rapidly near both endpoints of the averaging interval, and use them to construct continuous and discrete weighted time averages. The significance of this construction lies not merely in improving numerical efficiency. At the methodological level, it builds a bridge between fast time-averaging theory and the problems of dephasing and weak equilibration in quantum integrable systems. As a consequence, the present paper not only identifies the relevant equilibrium object in the quantum integrable setting, but also establishes explicit quantitative convergence rates in an appropriate finite-frequency framework.

In summary, this work develops weak convergence results for statistical ensembles associated with integrable quantum Hamiltonian systems and proposes a quantum weak-equilibration framework in terms of weighted dephasing toward the diagonal equilibrium state. By bringing weighted Birkhoff averaging into quantum integrable dynamics, we obtain both general convergence results and explicit convergence rates, with quantitative dependence on the parameters \((p,q)\), in the finite-frequency case. In this way, the idea of weak equilibration is unified with the theory of rapidly convergent weighted averages in the quantum setting. In addition, we construct a completely solvable three-spin quantum integrable model, allowing a direct comparison between the theoretical results and numerical simulations. This example clearly demonstrates the acceleration effect of weighted averages relative to ordinary averages and, under suitable model choices and high-precision numerical implementation, confirms the faster convergence associated with larger values of \(\min\{p,q\}\). It should be emphasized, however, that, as also stressed by Tong and Li \cite{TongLi2026}, the effective convergence observed at finite sample size is influenced not only by the asymptotic theory, but also by normalization constants, nonresonance structure, and numerical errors. Accordingly, the numerical results in this paper should be understood as strong evidence for the underlying mechanism, rather than as a naive universal prediction valid for all models.

The remainder of the paper is organized as follows. In Section~2, we introduce quantum systems with pure point spectrum, the diagonal (dephased) state, and the basic definitions of weighted time averages. Section~3 establishes a general weighted dephasing theorem, the weak convergence of weighted averaged states, and the quantitative acceleration estimates in the finite-frequency case. In Section~4, we study an explicit three-spin quantum integrable model and verify the theoretical results through analytic calculations and numerical experiments. Finally, Section~5 concludes the paper and discusses several possible directions for future research.

\section{Preliminaries}\label{sec2}
In this section, we describe the quantum system studied in this paper, formulate the key definitions, and introduce the notation needed in the subsequent analysis.

\subsection{Quantum systems with pure point spectrum}

Let \(\Hh\) be a separable Hilbert space, endowed with inner product \(\langle \cdot,\cdot\rangle\).
We consider a self-adjoint operator
\begin{equation*}
H:D(H)\subset \Hh \to \Hh,
\end{equation*}
which represents the Hamiltonian of the system.

Throughout this paper, we assume that \(H\) has \emph{pure point spectrum}. This means that the spectrum of \(H\) consists entirely of eigenvalues, and that \(\Hh\) is spanned by the eigenvectors of \(H\); equivalently, \(H\) has no continuous spectral component.
By the spectral theorem, there exists at most a countable family of eigenvalues \(\lambda\in\sigma(H)\), together with the corresponding orthogonal spectral projections \(\Pi_\lambda\), such that
\begin{equation}\label{eq:pure-point-decomp}
H=\sum_{\lambda\in\sigma(H)} \lambda\,\Pi_\lambda.
\end{equation}
If \(H\) is unbounded, the above expansion is understood to hold on the domain \(D(H)\) in the strong operator topology; if \(H\) is bounded, it may be interpreted directly in the usual operator sense.

Moreover, \(\Pi_\lambda\) denotes the \emph{spectral projection} associated with the eigenvalue \(\lambda\), namely, the orthogonal projection from \(\Hh\) onto the eigenspace of energy \(\lambda\),
\begin{equation*}
E_\lambda:=\ker(H-\lambda I).
\end{equation*}
In other words, for any \(\psi\in\Hh\), the vector \(\Pi_\lambda\psi\) is precisely the component of \(\psi\) lying in the energy subspace \(E_\lambda\).
These spectral projections satisfy
\begin{equation*}
\Pi_\lambda^\ast=\Pi_\lambda,\qquad
\Pi_\lambda^2=\Pi_\lambda,\qquad
\Pi_\lambda\Pi_\mu=0\quad (\lambda\neq\mu),
\end{equation*}
and moreover,
\begin{equation*}
\sum_{\lambda\in\sigma(H)} \Pi_\lambda=I.
\end{equation*}

Therefore, the Hilbert space admits the orthogonal direct-sum decomposition
\begin{equation*}
\Hh=\bigoplus_{\lambda\in\sigma(H)} E_\lambda.
\end{equation*}
In the above decomposition, if the eigenspace corresponding to an eigenvalue \(\lambda\) satisfies
\[
\dim E_\lambda = 1,
\]
then \(\lambda\) is said to be \emph{non-degenerate}. If, on the other hand,
\[
\dim E_\lambda > 1,
\]
then \(\lambda\) is said to be \emph{degenerate}, and one also says that the system exhibits \emph{spectral degeneracy} at the energy level \(\lambda\).
Physically, this means that multiple linearly independent quantum states correspond to the same energy level.

Spectral degeneracy is essential in this paper, since long-time averaging or dephasing removes the off-diagonal couplings \emph{between distinct energy subspaces}, while retaining the block structure \emph{inside each degenerate energy subspace}.

Let \(\rho_0\) denote the initial state of the system. In this paper, \(\rho_0\) is taken to be a density operator, that is, a trace-class operator satisfying
\[
\rho_0\ge 0,\qquad \Tr(\rho_0)=1.
\]
This allows the system to be in a general statistical mixed state.

In particular, if there exists a normalized vector
\[
\psi_0\in \Hh,\qquad \|\psi_0\|=1,
\]
such that
\begin{equation}\label{eq:pure-state-rho0}
\rho_0=\ket{\psi_0}\bra{\psi_0},
\end{equation}
then the system is said to be in a \emph{pure state}.
Here \(\psi_0\) is the state vector of the system at the initial time, and it completely characterizes the corresponding pure quantum state.
In Dirac notation, \(\ket{\psi_0}\bra{\psi_0}\) denotes the rank-one orthogonal projection generated by \(\psi_0\); more precisely, for any \(\varphi\in\Hh\),
\[
(\ket{\psi_0}\bra{\psi_0})\varphi
=
\braket{\psi_0|\varphi}\,\ket{\psi_0}.
\]
Thus, \(\rho_0=\ket{\psi_0}\bra{\psi_0}\) means that the system is in the state \(\psi_0\) with probability \(1\).

More generally, if
\[
\rho_0=\sum_j p_j \ket{\psi_j}\bra{\psi_j},
\qquad
p_j\ge 0,\quad \sum_j p_j=1,
\]
then \(\rho_0\) represents a statistical mixture of the pure states \(\psi_j\).

Under Hamiltonian evolution, the quantum state at time \(t\) is given by
\begin{equation}\label{eq:rho-t}
\rho_t=e^{-itH}\rho_0 e^{itH}.
\end{equation}
Here \(e^{-itH}\) denotes the unitary group generated by the self-adjoint operator \(H\).

If the initial state is pure, namely, \(\rho_0=\ket{\psi_0}\bra{\psi_0}\), then the corresponding state vector evolves according to the Schr\"odinger dynamics
\begin{equation}\label{eq:psi-t}
\psi_t=e^{-itH}\psi_0,
\end{equation}
and one has
\[
\rho_t=\ket{\psi_t}\bra{\psi_t}.
\]
Therefore, in the pure-state case, \(\psi_0\) is the most fundamental initial datum, while the density operator \(\rho_0\) is simply its equivalent operator representation. In the mixed-state case, however, one must work with \(\rho_0\) in order to describe the statistical state of the system in a unified manner.

For any bounded self-adjoint operator \(A\in\mathcal B(\Hh)\), we call \(A\) an observable, and define its expectation value in the state \(\rho_t\) by
\begin{equation}\label{eq:expectation-A}
\langle A\rangle_t:=\Tr(\rho_t A).
\end{equation}
Here \(\mathcal B(\Hh)\) denotes the algebra of all bounded linear operators on \(\Hh\), and \(\Tr\) stands for the operator trace.

Corresponding to the angle-averaged equilibrium state in classical integrable systems, a natural ``equilibrium object'' for quantum systems with pure point spectrum is the \emph{diagonal (dephased) state}
\begin{equation}\label{eq:diag-state}
\rhodiag:=\sum_{\lambda\in\sigma(H)} \Pi_\lambda \rho_0 \Pi_\lambda.
\end{equation}
Since \(\rho_0\) is a trace-class operator and each \(\Pi_\lambda\) is a bounded orthogonal projection, the above expression is well defined in the trace-norm sense.

From a structural point of view, \(\rhodiag\) is precisely the block-diagonal part of \(\rho_0\) with respect to the energy decomposition
\[
\Hh=\bigoplus_{\lambda\in\sigma(H)} E_\lambda.
\]
All off-diagonal components connecting distinct energy subspaces \(E_\lambda\) and \(E_\mu\) (\(\lambda\neq\mu\)) are removed, whereas the part within each fixed energy block \(E_\lambda\) is retained.

If the spectrum is completely non-degenerate, that is, if each \(E_\lambda\) is one-dimensional, and if \(\{e_n\}\) is a normalized eigenbasis such that
\[
He_n=E_n e_n,
\]
then
\[
\Pi_{E_n}=\ket{e_n}\bra{e_n},
\]
and hence
\[
\rhodiag
=
\sum_n \ket{e_n}\bra{e_n}\,\rho_0\,\ket{e_n}\bra{e_n}.
\]
In this case, \(\rhodiag\) reduces to the usual diagonal ensemble, namely, the operator obtained by retaining only the genuinely diagonal entries in the energy eigenbasis.

If the spectrum is degenerate, then \(\rhodiag\) is generally not fully diagonal entry by entry; rather, it is only \emph{block-diagonal} with respect to the energy decomposition.
This point is crucial, because under time evolution phase factors of the form
\[
e^{-it(\lambda-\mu)}
\]
arise between different energy sectors.
When \(\lambda\neq\mu\), these phases oscillate and are therefore washed out by time averaging; when \(\lambda=\mu\), however, the factor is identically equal to \(1\), so the terms inside the same degenerate energy block are not averaged out.
For this reason, the state \(\rhodiag\) defined in \eqref{eq:diag-state} is the most natural, and also the most robust, notion of equilibrium state for quantum systems with pure point spectrum.

\subsection{Weight Functions and Weak Convergence of Weighted Averaged States}
In this paper we adopt the same weight function \(w_{p,q}(x)\) as that used in Tong-Li \cite{TongLi2026}. 
Fix \(p,q>0\). Define
\begin{equation}
w_{p,q}(x)=
\begin{cases}
C_{p,q}^{-1}\exp\!\bigl(-x^{-p}(1-x)^{-q}\bigr), & x\in(0,1),\\[4pt]
0, & x\notin(0,1),
\end{cases}
\label{eq:weight}
\end{equation}
where
\[
C_{p,q}:=\int_0^1 \exp\!\bigl(-s^{-p}(1-s)^{-q}\bigr)\,ds.
\]
Then \(w_{p,q}\in C_0^\infty([0,1])\), and it satisfies
\[
\int_0^1 w_{p,q}(x)\,dx=1.
\]
For a scalar time-dependent function \(g(t)\), we define its continuous weighted average by
\begin{equation}\label{eq:cont-weighted}
\Wavg_T^{(p,q)}[g]
:=
\frac{\int_0^T w_{p,q}(t/T)\,g(t)\,dt}
{\int_0^T w_{p,q}(t/T)\,dt}
=
\int_0^1 w_{p,q}(s)\,g(Ts)\,ds.
\end{equation}
For a sequence \(\{g_n\}_{n\ge 0}\), we define the discrete weighted average by
\begin{equation}\label{eq:disc-weighted}
W_N^{(p,q)}(g)
:=
\frac{\sum_{n=0}^{N-1} w_{p,q}(n/N)\,g_n}
{\sum_{n=0}^{N-1} w_{p,q}(n/N)}.
\end{equation}

\section{Main Results}\label{sec3}
In this section, we present the main theoretical results of the paper and clarify the logical hierarchy among them. 
First, within the general framework of quantum systems with pure point spectrum, we prove that the weighted time average of any observable satisfying a suitable absolute summability condition converges to its expectation value in the diagonal (dephased) equilibrium state. This result establishes, at an abstract level, the basic mechanism of ``weighted weak equilibration'' in quantum integrable systems. 
Next, in the case of quasiperiodic signals with finitely many frequencies, we further derive explicit quantitative convergence estimates. In particular, we show that weighted averages exhibit a stretched-exponential acceleration compared with ordinary averages, with the exponent determined by the weight parameters \((p,q)\). 
These general conclusions provide the direct theoretical foundation for the analytic computations and numerical verification carried out in the explicit quantum integrable model studied in the next section.

\subsection{A General Weighted Dephasing Theorem}

We begin by stating a general convergence theorem.

\begin{theorem}\label{thm:main-dephase}
Let \(H\) be a self-adjoint operator with pure point spectrum, let \(\rho_0\) be an initial density operator, and let \(A\in \mathcal B(\Hh)\) be a bounded observable. 
Let \(\{e_m\}_{m\ge 1}\) be an orthonormal eigenbasis of \(H\), satisfying
\[
He_m=E_m e_m.
\]
Assume that
\begin{equation}
\sum_{m,n\ge 1} |\rho_{mn}A_{nm}|<\infty,
\qquad
\rho_{mn}:=\langle e_m,\rho_0 e_n\rangle,\quad
A_{nm}:=\langle e_n,A e_m\rangle.
\label{eq:abs-sum}
\end{equation}
Then, for any \(p,q>0\),
\begin{equation}
\lim_{T\to\infty}\Wavg_T^{(p,q)}[\langle A\rangle_\cdot]
=
\Tr(\rhodiag A).
\label{eq:main-limit}
\end{equation}
\end{theorem}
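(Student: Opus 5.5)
The plan is to expand $\langle A\rangle_t$ in the eigenbasis $\{e_m\}$ and reduce everything to scalar oscillatory averages. Since $\rho_t=e^{-itH}\rho_0e^{itH}$ and $e^{-itH}e_m=e^{-itE_m}e_m$, we have $\langle e_m,\rho_t e_n\rangle=e^{-it(E_m-E_n)}\rho_{mn}$. Hence, computing the trace in the basis $\{e_m\}$,
\[
\langle A\rangle_t=\Tr(\rho_tA)=\sum_{m,n\ge1}\rho_{mn}A_{nm}\,e^{-it(E_m-E_n)} .
\]
To justify this double-sum expansion, write $\Tr(\rho_tA)=\sum_m\langle e_m,\rho_tAe_m\rangle$ and insert $\sum_n\ket{e_n}\bra{e_n}=I$ between $\rho_t$ and $A$. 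This gives an iterated sum, which we may treat as an unordered absolutely convergent double sum thanks to \eqref{eq:abs-sum}; the convergence is uniform in $t$, since the phases have modulus one. The same computation with $\rhodiag$ in place of $\rho_t$ works as follows. Since $\Pi_\lambda e_n=\delta_{E_n,\lambda}e_n$, we get $\langle e_m,\rhodiag e_n\rangle=\rho_{mn}\mathbf 1[E_m=E_n]$, and therefore
\[
\Tr(\rhodiag A)=\sum_{E_m=E_n}\rho_{mn}A_{nm}.
\]

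Next, apply $\Wavg_T^{(p,q)}$ termwise. The summand is bounded by $|\rho_{mn}A_{nm}|$ uniformly in $t$, and $w_{p,q}\ge0$ has integral $1$. So Fubini (or dominated convergence for series) allows the exchange
\[
\Wavg_T^{(p,q)}[\langle A\rangle_\cdot]=\sum_{m,n}\rho_{mn}A_{nm}\,\widehat w(T(E_m-E_n)),\qquad \widehat w(\xi):=\int_0^1w_{p,q}(s)e^{-is\xi}\,ds .
\]
For pairs with $E_m=E_n$, the factor is $\widehat w(0)=1$, which reproduces exactly the diagonal contribution. For pairs with $\omega:=E_m-E_n\ne0$, we need $\widehat w(T\omega)\to0$ as $T\to\infty$. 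This follows from the Riemann–Lebesgue lemma, since $w_{p,q}\in L^1$. More concretely, because $w_{p,q}$ is smooth and vanishes to all orders at $0$ and $1$, integration by parts gives $|\widehat w(\xi)|\le \|w_{p,q}'\|_{L^1}/|\xi|$, and indeed $O(|\xi|^{-k})$ for every $k$.

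The final step is to pass the limit $T\to\infty$ through the infinite sum, which is the only genuine obstacle. It is handled by dominated convergence for series: $|\widehat w(\xi)|\le1$ for all $\xi$, so each term is dominated by $|\rho_{mn}A_{nm}|$, which is summable by \eqref{eq:abs-sum}. Each term converges to $\rho_{mn}A_{nm}\mathbf 1[E_m=E_n]$, so the sum converges to $\Tr(\rhodiag A)$, proving \eqref{eq:main-limit}.

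No uniformity in the frequencies $E_m-E_n$ is needed, which is why no rate appears at this level of generality. The only care point is the earlier trace manipulation. I would note that $\rho_t$ is trace class, so $\Tr(\rho_tA)=\sum_m\langle e_m,\rho_tAe_m\rangle$ holds in any orthonormal basis, and the inner expansion $\langle e_m,\rho_tAe_m\rangle=\sum_n\langle e_m,\rho_te_n\rangle\langle e_n,Ae_m\rangle$ is just Parseval.
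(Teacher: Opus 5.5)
Your proposal is correct and follows essentially the same route as the paper. You expand $\langle A\rangle_t$ in the eigenbasis and interchange the weighted average with the absolutely summable double sum. You then use $|F_T|\le 1$, $F_T(0)=1$ and Riemann--Lebesgue for $\omega\neq 0$, and conclude by dominated convergence for series. Your extra details make explicit what the paper only asserts: the Parseval justification of the trace expansion, the matrix elements $\langle e_m,\rhodiag e_n\rangle=\rho_{mn}\mathbf 1[E_m=E_n]$, and the integration-by-parts decay bound.
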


\begin{proof}
By the spectral decomposition, one has
\[
\rho_t
=
\sum_{m,n\ge 1}\rho_{mn} e^{-it(E_m-E_n)}\ket{e_m}\bra{e_n},
\]
and hence
\begin{equation}
\langle A\rangle_t
=
\Tr(\rho_t A)
=
\sum_{m,n\ge 1}\rho_{mn}A_{nm}e^{-it(E_m-E_n)}.
\label{eq:expansion-A}
\end{equation}
By assumption \eqref{eq:abs-sum}, the series on the right-hand side is absolutely summable, so the order of summation and the subsequent integration may be interchanged. Substituting \eqref{eq:expansion-A} into the continuous weighted average \eqref{eq:cont-weighted}, we obtain
\[
\Wavg_T^{(p,q)}[\langle A\rangle_\cdot]
=
\sum_{m,n\ge 1}\rho_{mn}A_{nm}
\int_0^1 w_{p,q}(s)e^{-iT(E_m-E_n)s}\,ds.
\]
Define
\[
F_T(\omega):=\int_0^1 w_{p,q}(s)e^{-iT\omega s}\,ds.
\]
Then \(F_T(0)=1\), since \(\int_0^1 w_{p,q}(s)\,ds=1\). If \(\omega\neq 0\), then, because \(w_{p,q}\in L^1(0,1)\), the Riemann--Lebesgue lemma \cite{Mitchell2019} yields
\[
F_T(\omega)\to 0
\qquad (T\to\infty).
\]
Therefore, for each pair \((m,n)\),
\[
\rho_{mn}A_{nm}F_T(E_m-E_n)
\to
\begin{cases}
\rho_{mn}A_{nm}, & E_m=E_n,\\[4pt]
0, & E_m\neq E_n.
\end{cases}
\]
Moreover,
\[
|F_T(\omega)|
\le \int_0^1 w_{p,q}(s)\,ds=1,
\]
so each summand is dominated by the absolutely summable family \(|\rho_{mn}A_{nm}|\). By dominated convergence for series, it follows that
\[
\lim_{T\to\infty}\Wavg_T^{(p,q)}[\langle A\rangle_\cdot]
=
\sum_{E_m=E_n}\rho_{mn}A_{nm}.
\]
On the other hand,
\[
\rhodiag=\sum_\lambda \Pi_\lambda \rho_0 \Pi_\lambda
\]
retains precisely those matrix elements belonging to the blocks with \(E_m=E_n\) in the eigenbasis. Hence
\[
\sum_{E_m=E_n}\rho_{mn}A_{nm}
=
\Tr(\rhodiag A).
\]

This proves the theorem.
\end{proof}

\begin{corollary}
\label{cor:weak-state}
Let \(\mathfrak A\subset \mathcal B(\Hh)\) be a class of observables such that, for every \(A\in\mathfrak A\), the absolute summability condition \eqref{eq:abs-sum} in Theorem~\ref{thm:main-dephase} is satisfied. Define
\begin{equation}
\bar\rho_T^{(p,q)}
:=
\frac{\int_0^T w_{p,q}(t/T)\,\rho_t\,dt}
{\int_0^T w_{p,q}(t/T)\,dt}.
\label{eq:avg-state-cor}
\end{equation}
Then, as \(T\to\infty\),
\[
\bar\rho_T^{(p,q)} \rightharpoonup \rhodiag
\]
weakly on \(\mathfrak A\); that is, for every \(A\in\mathfrak A\),
\[
\Tr\!\bigl((\bar\rho_T^{(p,q)}-\rhodiag)A\bigr)\to 0.
\]
\end{corollary}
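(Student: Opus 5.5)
The plan is to reduce the corollary to Theorem~\ref{thm:main-dephase}. The key identity to establish is
\[
\Tr\!\bigl(\bar\rho_T^{(p,q)}A\bigr)=\Wavg_T^{(p,q)}[\langle A\rangle_\cdot]
\qquad\text{for every } A\in\mathcal B(\Hh).
\]
Once this holds, the statement follows at once. For $A\in\mathfrak A$, condition \eqref{eq:abs-sum} is satisfied by hypothesis, so Theorem~\ref{thm:main-dephase} gives $\Wavg_T^{(p,q)}[\langle A\rangle_\cdot]\to\Tr(\rhodiag A)$. Since $\rhodiag$ is trace class, $\Tr(\rhodiag A)$ is finite, and linearity of the trace then gives $\Tr((\bar\rho_T^{(p,q)}-\rhodiag)A)\to 0$.

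First I would make sense of \eqref{eq:avg-state-cor} as a trace-class operator. The map $t\mapsto\rho_t=e^{-itH}\rho_0e^{itH}$ is continuous in trace norm, because $e^{-itH}$ is strongly continuous and $\rho_0$ is trace class. A standard $\varepsilon/3$ argument, approximating $\rho_0$ by finite-rank operators, proves this. We also have $\|\rho_t\|_1=1$, and the weight $t\mapsto w_{p,q}(t/T)$ is continuous and bounded on $[0,T]$. So the integral exists as a Riemann (or Bochner) integral in the Banach space $\mathcal S_1$ of trace-class operators. The denominator equals $T\int_0^1 w_{p,q}=T>0$, so $\bar\rho_T^{(p,q)}$ is a well-defined density operator.

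Second, I would exchange the trace with the integral. For fixed bounded $A$, the functional $X\mapsto\Tr(XA)$ is continuous and linear on $\mathcal S_1$, since $|\Tr(XA)|\le\|A\|\,\|X\|_1$. Continuous linear functionals commute with Bochner integrals, so
\[
\Tr\!\bigl(\bar\rho_T^{(p,q)}A\bigr)=\frac{1}{T}\int_0^T w_{p,q}(t/T)\,\Tr(\rho_tA)\,dt=\int_0^1 w_{p,q}(s)\langle A\rangle_{Ts}\,ds .
\]
By \eqref{eq:cont-weighted}, the right-hand side is exactly $\Wavg_T^{(p,q)}[\langle A\rangle_\cdot]$.

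I expect the only genuine point of care to be this interchange, that is, the trace-norm continuity of $t\mapsto\rho_t$ that justifies a vector-valued integral. If one prefers to avoid Bochner integrals, the fallback is to define $\bar\rho_T^{(p,q)}$ weakly, through its matrix elements $\langle\varphi,\bar\rho_T^{(p,q)}\psi\rangle$. One then checks the identity using the eigenbasis expansion \eqref{eq:expansion-A} together with Fubini, which is legitimate under \eqref{eq:abs-sum} for each $A\in\mathfrak A$. That is all the corollary requires.
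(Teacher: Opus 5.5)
Your proposal is correct and follows the paper's proof essentially step for step. Like the paper, you get trace-norm continuity of $t\mapsto\rho_t$ by finite-rank approximation, which makes the numerator of $\bar\rho_T^{(p,q)}$ a Bochner integral in the trace class; you then pass the bounded functional $X\mapsto\Tr(XA)$ through the integral to obtain $\Tr(\bar\rho_T^{(p,q)}A)=\Wavg_T^{(p,q)}[\langle A\rangle_\cdot]$, and conclude with Theorem~\ref{thm:main-dephase}.
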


\begin{proof}
Set
\[
Z_T:=\int_0^T w_{p,q}(t/T)\,dt.
\]
Since \(w_{p,q}\ge 0\) and is strictly positive on \((0,1)\), it follows that \(Z_T>0\) for every \(T>0\).

On the other hand, since \(\rho_0\) is a density operator, it is trace class and satisfies
\[
\|\rho_0\|_1=\Tr(\rho_0)=1.
\]
Moreover,
\[
\rho_t=e^{-itH}\rho_0 e^{itH},
\]
and unitary conjugation preserves the trace norm. Hence, for every \(t\in\mathbb R\),
\[
\|\rho_t\|_1=\|\rho_0\|_1=1.
\]

We next show that the map \(t\mapsto \rho_t\) is continuous in the trace norm. We first verify this for finite-rank operators. If
\[
F=\sum_{j=1}^N \ket{\phi_j}\bra{\psi_j},
\]
then
\[
e^{-itH}F e^{itH}
=
\sum_{j=1}^N \ket{e^{-itH}\phi_j}\bra{e^{-itH}\psi_j}.
\]
Since \(e^{-itH}\) is strongly continuous on \(\Hh\) with respect to \(t\), the above expression depends continuously on \(t\) in the trace norm for every finite-rank operator \(F\). Using the density of finite-rank operators in the trace-class space \(\mathfrak S_1(\Hh)\), together with the fact that unitary conjugation preserves the trace norm, we conclude that \(t\mapsto \rho_t\) is trace-norm continuous for general trace-class \(\rho_0\) as well.

Therefore,
\[
t\longmapsto w_{p,q}(t/T)\rho_t
\]
is a Bochner integrable \(\mathfrak S_1(\Hh)\)-valued function on \([0,T]\). Hence
\[
\int_0^T w_{p,q}(t/T)\rho_t\,dt
\]
is well defined in the trace-class sense, and thus \(\bar\rho_T^{(p,q)}\) is well defined.

Now fix any \(A\in \mathfrak A\subset \mathcal B(\Hh)\). Consider the linear functional on \(\mathfrak S_1(\Hh)\) defined by
\[
L_A(X):=\Tr(XA).
\]
Since \(A\) is bounded, \(L_A\) is a continuous linear functional on \(\mathfrak S_1(\Hh)\), and satisfies
\[
|L_A(X)|=|\Tr(XA)|\le \|A\|\,\|X\|_1.
\]
By the compatibility of the Bochner integral with continuous linear functionals,
\[
L_A\!\left(\int_0^T w_{p,q}(t/T)\rho_t\,dt\right)
=
\int_0^T w_{p,q}(t/T)L_A(\rho_t)\,dt.
\]
Therefore,
\[
\Tr\!\left(\bar\rho_T^{(p,q)}A\right)
=
\frac{1}{Z_T}
\int_0^T w_{p,q}(t/T)\Tr(\rho_tA)\,dt.
\]
Recalling that \(\langle A\rangle_t=\Tr(\rho_tA)\), we obtain
\[
\Tr\!\left(\bar\rho_T^{(p,q)}A\right)
=
\frac{\int_0^T w_{p,q}(t/T)\,\langle A\rangle_t\,dt}
{\int_0^T w_{p,q}(t/T)\,dt}
=
\Wavg_T^{(p,q)}[\langle A\rangle_\cdot].
\]

Since \(A\in\mathfrak A\), the hypothesis of the corollary ensures that \(A\) satisfies the absolute summability condition \eqref{eq:abs-sum} in Theorem~\ref{thm:main-dephase}. We may therefore apply Theorem~\ref{thm:main-dephase} to obtain
\[
\lim_{T\to\infty}\Wavg_T^{(p,q)}[\langle A\rangle_\cdot]
=
\Tr(\rhodiag A).
\]
Combining this with the identity proved above yields
\[
\lim_{T\to\infty}\Tr\!\left(\bar\rho_T^{(p,q)}A\right)
=
\Tr(\rhodiag A).
\]
Hence
\[
\lim_{T\to\infty}\Tr\!\bigl((\bar\rho_T^{(p,q)}-\rhodiag)A\bigr)=0.
\]

By the definition of weak convergence on \(\mathfrak A\), this is exactly the statement that
\[
\bar\rho_T^{(p,q)} \rightharpoonup \rhodiag
\qquad (T\to\infty)
\]
weakly on \(\mathfrak A\).
This completes the proof.
\end{proof}

\subsection{Quantitative Acceleration for Quasiperiodic Signals with Finitely Many Frequencies}

Theorem~\ref{thm:main-dephase} establishes the limiting behavior, but does not provide a convergence rate. To obtain quantitative results, we now turn to quasiperiodic signals with finitely many frequencies, which also correspond to the setting of the numerical model considered later in the paper.

\begin{lemma}\label{lem:fourier-decay}
Regard \(w_{p,q}\) as a function defined on the whole real line \(\mathbb R\), extended by zero outside \((0,1)\). Then there exist constants \(c,C>0\), depending only on \(p\) and \(q\), such that
\begin{equation}\label{eq:fourier-decay}
|\widehat w_{p,q}(\xi)|
\le
C\exp\!\bigl(-c|\xi|^{\zeta(p,q)}\bigr),
\qquad
\zeta(p,q)=\left(1+\frac{1}{\min\{p,q\}}\right)^{-1}.
\end{equation}
\end{lemma}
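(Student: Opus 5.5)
We will prove the bound by repeated integration by parts combined with Gevrey-type bounds on the derivatives of \(w_{p,q}\), and then optimize over the number of integrations.

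\textbf{Step 1: integration by parts.} Write
\[
\widehat w_{p,q}(\xi)=\int_0^1 w_{p,q}(x)e^{-ix\xi}\,dx .
\]
Since \(w_{p,q}\in C^\infty(\mathbb R)\) and all its derivatives vanish at \(0\) and \(1\), we can integrate by parts \(k\) times with no boundary terms. This gives
\[
|\widehat w_{p,q}(\xi)|\le |\xi|^{-k}\,\|w_{p,q}^{(k)}\|_{L^1(0,1)}\qquad\text{for every } k\ge 0 .
\]

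\textbf{Step 2: Gevrey bound on derivatives.} Set \(r:=\min\{p,q\}\). The central estimate is
\[
\|w_{p,q}^{(k)}\|_{L^\infty}\le A B^k (k!)^{1+1/r}\qquad (k\ge 0),
\]
with \(A,B\) depending only on \(p,q\). In other words, \(w_{p,q}\) lies in the Gevrey class \(G^{1+1/r}\) uniformly on \([0,1]\).

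Split \([0,1]\) into \([0,1/2]\) and \([1/2,1]\). Near \(x=0\), write \(\exp(-x^{-p}(1-x)^{-q})=\exp(-x^{-p}h(x))\), where \(h=(1-x)^{-q}\) is real-analytic and bounded above and below on \([0,1/2]\). We then use the standard estimate for \(f(x)=e^{-\phi(x)}\) with \(\phi(x)\sim x^{-p}\).

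To bound \(f^{(k)}\), we will use Cauchy's integral formula on a complex disc centered at \(x\) of radius \(\delta x\), for a small fixed \(\delta\). On that disc, \(\operatorname{Re}\phi(z)\ge c_1x^{-p}\). This yields
\[
|f^{(k)}(x)|\le k!\,(\delta x)^{-k}e^{-c_1x^{-p}}.
\]
We then maximize over \(x>0\). The function \(x^{-k}e^{-c_1x^{-p}}\) is maximized at \(x^{-p}\asymp k/p\), with maximum value of order \((Ck)^{k/p}e^{-k/p}\). Stirling's formula then gives
\[
|f^{(k)}(x)|\lesssim C^k (k!)^{1+1/p}.
\]
The interval \([1/2,1]\) is handled symmetrically and gives \(C^k(k!)^{1+1/q}\). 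The worse of the two exponents, \(1+1/r\), controls the overall bound.

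\textbf{Step 3: optimization over \(k\).} Combining Steps 1 and 2, and writing \(s=1+1/r\),
\[
|\widehat w_{p,q}(\xi)|\le A\,\frac{B^k (k!)^{s}}{|\xi|^{k}}\le A\left(\frac{B' k^{s}}{|\xi|}\right)^{k}.
\]
Choose \(k\asymp (|\xi|/(eB'))^{1/s}\). Then the base of the power is at most \(e^{-s}\), and the bound becomes
\[
A\exp\!\bigl(-c|\xi|^{1/s}\bigr).
\]
Since \(1/s=(1+1/\min\{p,q\})^{-1}=\zeta(p,q)\), this is exactly \eqref{eq:fourier-decay}. For small \(|\xi|\), we simply use the trivial bound \(|\widehat w_{p,q}|\le 1\) and enlarge \(C\).

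\textbf{Main obstacle.} The hard part is Step 2: making the Cauchy-disc argument rigorous uniformly in \(x\). Concretely, one must show that the holomorphic extension of \(x^{-p}(1-x)^{-q}\) keeps \(\operatorname{Re}\) comparable to \(x^{-p}\) on discs of radius proportional to \(x\). Near \(0\) this follows from \(z^{-p}=x^{-p}(1+O(\delta))^{-p}\) using the principal branch. In the middle region, \(w_{p,q}\) is real-analytic and the bound is immediate.

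If the complex-analytic route became messy, the fallback is Faà di Bruno's formula combined with the known Gevrey closure of compositions. That route is more combinatorial but standard; it is the approach taken in Tong–Li \cite{TongLi2026}, whose estimate we could alternatively cite.
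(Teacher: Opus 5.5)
Your proposal is correct and follows essentially the same route as the paper: integration by parts with no boundary terms, a Gevrey-$(1+1/\min\{p,q\})$ bound on $w_{p,q}^{(k)}$ obtained by splitting $[0,1]$ at $1/2$ and treating the other endpoint factor as a bounded analytic coefficient, and then choosing $k\asymp|\xi|^{1/s}$. The one difference is that you actually sketch the endpoint-flat Gevrey estimate via Cauchy estimates on discs of radius $\delta x$, which works as you describe, whereas the paper only cites it as standard; also, with your choice of $k$ the base is at most $e^{-1}$ rather than $e^{-s}$, which changes nothing.
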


\begin{proof}
Set
\[
r:=\min\{p,q\},
\qquad
s:=1+\frac1r.
\]
Note that
\[
\zeta(p,q)=\frac1s.
\]
Throughout this paper, we use the Fourier transform convention
\[
\widehat w_{p,q}(\xi)
:=
\int_{\R} e^{-i\xi x} w_{p,q}(x)\,dx.
\]

We first prove that there exist constants \(A_0,B_0>0\) such that, for every integer \(m\ge 0\),
\begin{equation}
\|w_{p,q}^{(m)}\|_{L^1(\R)}
\le
A_0 B_0^m (m!)^s.
\label{eq:L1-Gevrey}
\end{equation}

Since the support of \(w_{p,q}\) is contained in \([0,1]\), it suffices to estimate its derivatives on the interval \([0,1]\).
We split \([0,1]\) into the two subintervals
\[
[0,1]=\Bigl[0,\frac12\Bigr]\cup\Bigl[\frac12,1\Bigr].
\]

When \(x\in[0,\frac12]\), we may write
\[
w_{p,q}(x)
=
C_{p,q}^{-1}\exp\!\bigl(-a_q(x)x^{-p}\bigr),
\qquad
a_q(x):=(1-x)^{-q}.
\]
The function \(a_q\) is analytic in a complex neighborhood of \([0,\frac12]\), and there exist constants \(0<a_- \le a_+<\infty\) such that
\[
a_- \le a_q(x)\le a_+,
\qquad x\in\Bigl[0,\frac12\Bigr].
\]
Therefore, near the left endpoint, the structure of \(w_{p,q}\) is completely analogous to that of the standard endpoint-flat function
\[
x\longmapsto e^{-x^{-p}},
\]
up to multiplication by a positive analytic coefficient \(a_q(x)\).
By the standard Gevrey estimate for endpoint-flat functions, there exist constants \(A_1,B_1>0\) such that, for all \(m\ge 0\),
\begin{equation}
\sup_{x\in[0,1/2]} |w_{p,q}^{(m)}(x)|
\le
A_1 B_1^m (m!)^{1+1/p}.
\label{eq:left-Gevrey}
\end{equation}

Similarly, when \(x\in[\frac12,1]\), we may write
\[
w_{p,q}(x)
=
C_{p,q}^{-1}\exp\!\bigl(-b_p(x)(1-x)^{-q}\bigr),
\qquad
b_p(x):=x^{-p}.
\]
The function \(b_p\) is analytic in a complex neighborhood of \([\frac12,1]\), and there exist constants \(0<b_- \le b_+<\infty\) such that
\[
b_- \le b_p(x)\le b_+,
\qquad x\in\Bigl[\frac12,1\Bigr].
\]
By the same endpoint-flat Gevrey estimate, there exist constants \(A_2,B_2>0\) such that, for all \(m\ge 0\),
\begin{equation}
\sup_{x\in[1/2,1]} |w_{p,q}^{(m)}(x)|
\le
A_2 B_2^m (m!)^{1+1/q}.
\label{eq:right-Gevrey}
\end{equation}

Since
\[
s=1+\frac1r=1+\frac1{\min\{p,q\}}
=\max\left\{1+\frac1p,\ 1+\frac1q\right\},
\]
we have
\[
(m!)^{1+1/p}\le (m!)^s,
\qquad
(m!)^{1+1/q}\le (m!)^s.
\]
Integrating \eqref{eq:left-Gevrey} and \eqref{eq:right-Gevrey} over finite intervals, and enlarging the constants if necessary, we obtain constants \(A_0,B_0>0\) such that
\[
\|w_{p,q}^{(m)}\|_{L^1(\R)}
=
\|w_{p,q}^{(m)}\|_{L^1(0,1)}
\le
A_0 B_0^m (m!)^s,
\]
which proves \eqref{eq:L1-Gevrey}.

Since \(w_{p,q}\in C_0^\infty(\R)\), for any \(\xi\neq 0\) and any integer \(m\ge 0\), repeated integration by parts in the Fourier integral yields
\[
\widehat w_{p,q}(\xi)
=
(i\xi)^{-m}
\int_{\R} e^{-i\xi x} w_{p,q}^{(m)}(x)\,dx.
\]
Hence
\begin{equation}
|\widehat w_{p,q}(\xi)|
\le
|\xi|^{-m}\,\|w_{p,q}^{(m)}\|_{L^1(\R)}.
\label{eq:ibp-fourier}
\end{equation}
Substituting \eqref{eq:L1-Gevrey} into \eqref{eq:ibp-fourier}, we obtain
\begin{equation}
|\widehat w_{p,q}(\xi)|
\le
A_0\left(\frac{B_0}{|\xi|}\right)^m (m!)^s.
\label{eq:pre-optimize}
\end{equation}

We now optimize with respect to \(m\). Using the crude estimate \(m!\le m^m\) for \(m\ge 1\), we get
\[
(m!)^s\le m^{sm}.
\]
Therefore,
\begin{equation}
|\widehat w_{p,q}(\xi)|
\le
A_0\left(\frac{B_0 m^s}{|\xi|}\right)^m.
\label{eq:rough-optimize}
\end{equation}

Now assume that \(|\xi|\) is sufficiently large, and choose
\begin{equation}
m
=
\left\lfloor
\left(\frac{|\xi|}{2B_0}\right)^{1/s}
\right\rfloor.
\label{eq:m-choice}
\end{equation}
For \(|\xi|\) large enough, we have \(m\ge 1\), and since \(\lfloor x\rfloor \le x\),
\[
m^s \le \frac{|\xi|}{2B_0}.
\]
Substituting this into \eqref{eq:rough-optimize}, we obtain
\[
|\widehat w_{p,q}(\xi)|
\le
A_0\,2^{-m}.
\]
On the other hand, for sufficiently large \(|\xi|\), the bound \(\lfloor x\rfloor \ge x/2\) for \(x\ge 1\) implies that there exists a constant \(c_1>0\) such that
\[
m\ge c_1 |\xi|^{1/s}.
\]
Hence
\[
2^{-m}
\le
\exp\!\bigl(-(\log 2)\,m\bigr)
\le
\exp\!\bigl(-c_2 |\xi|^{1/s}\bigr)
\]
for some constant \(c_2>0\). Therefore, for all sufficiently large \(|\xi|\),
\[
|\widehat w_{p,q}(\xi)|
\le
A_0 \exp\!\bigl(-c_2 |\xi|^{1/s}\bigr).
\]

For \(\xi\) in a bounded interval, it suffices to use the fact that \(w_{p,q}\in L^1(\R)\), which gives
\[
|\widehat w_{p,q}(\xi)|
\le
\|w_{p,q}\|_{L^1(\R)}.
\]
Hence, after enlarging the constant \(C\) if necessary, the small- and large-frequency regimes can be combined into the uniform estimate
\[
|\widehat w_{p,q}(\xi)|
\le
C\exp\!\bigl(-c|\xi|^{1/s}\bigr).
\]
Finally, since
\[
\frac1s=\frac{r}{r+1}
=
\left(1+\frac1r\right)^{-1}
=
\left(1+\frac1{\min\{p,q\}}\right)^{-1}
=
\zeta(p,q),
\]
we conclude that
\[
|\widehat w_{p,q}(\xi)|
\le
C\exp\!\bigl(-c|\xi|^{\zeta(p,q)}\bigr).
\]
This completes the proof.
\end{proof}

\begin{theorem}
\label{thm:finite-freq}
Suppose that
\begin{equation}
g_n=a_0+\sum_{\ell=1}^{M} a_\ell e^{2\pi i \nu_\ell n},
\qquad n=0,1,2,\dots,
\label{eq:finite-trig}
\end{equation}
where \(a_0,a_\ell\in\C\), and assume that all nonzero frequencies satisfy
\begin{equation}
\delta:=\min_{1\le \ell\le M}\dist(\nu_\ell,\Z)>0.
\label{eq:gap-dist}
\end{equation}
Then there exist constants \(c,C>0\) such that
\begin{equation}
\bigl|W_N^{(p,q)}(g)-a_0\bigr|
\le
C\exp\!\bigl(-c(\delta N)^{\zeta(p,q)}\bigr),
\qquad
\zeta(p,q)=\left(1+\frac{1}{\min\{p,q\}}\right)^{-1}.
\label{eq:quant-rate}
\end{equation}
\end{theorem}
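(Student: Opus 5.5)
The plan is to reduce the estimate to a single pure exponential by linearity, and then to evaluate the weighted exponential sum exactly by Poisson summation, so that Lemma~\ref{lem:fourier-decay} can be applied directly. Since the weights in \eqref{eq:disc-weighted} are normalized, \(W_N^{(p,q)}\) maps constants to themselves. Hence
\[
W_N^{(p,q)}(g)-a_0=\sum_{\ell=1}^M a_\ell\,\frac{S_N(\nu_\ell)}{S_N(0)},
\qquad
S_N(\nu):=\sum_{n=0}^{N-1} w_{p,q}(n/N)\,e^{2\pi i\nu n}.
\]
It therefore suffices to show two things: \(|S_N(\nu)|\le C_1 N\exp(-c(\delta N)^{\zeta})\) whenever \(\dist(\nu,\Z)\ge\delta\), and \(S_N(0)\ge N/2\) for \(N\) large. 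The final constant is then \(C=2C_1\sum_\ell|a_\ell|\).

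For the exact formula, note that \(w_{p,q}\) is supported in \([0,1]\) and vanishes at \(x=1\). So \(S_N(\nu)\) equals the full lattice sum \(\sum_{n\in\Z} f(n)\), where \(f(x)=w_{p,q}(x/N)e^{2\pi i\nu x}\). This \(f\) belongs to \(C_0^\infty(\R)\), so Poisson summation applies. With the convention \(\widehat f(\xi)=\int e^{-i\xi x}f(x)\,dx\), a change of variables gives \(\widehat f(2\pi k)=N\,\widehat w_{p,q}\bigl(2\pi N(k-\nu)\bigr)\). Therefore
\[
S_N(\nu)=N\sum_{k\in\Z}\widehat w_{p,q}\bigl(2\pi N(k-\nu)\bigr).
\]
The same identity at \(\nu=0\) gives \(S_N(0)=N\bigl(\widehat w_{p,q}(0)+\sum_{k\ne0}\widehat w_{p,q}(2\pi Nk)\bigr)\), and \(\widehat w_{p,q}(0)=1\).

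Now insert the bound of Lemma~\ref{lem:fourier-decay}, \(|\widehat w_{p,q}(\xi)|\le C\exp(-c|\xi|^{\zeta})\) with \(\zeta=\zeta(p,q)\).
\begin{itemize}
\item For \(\nu=0\), the terms with \(k\neq 0\) sum to at most \(2C\sum_{k\ge1}e^{-c(2\pi Nk)^\zeta}\). This tends to \(0\), so it is \(\le 1/2\) once \(N\ge N_0\), giving \(S_N(0)\ge N/2\).
\item For general \(\nu\), every \(k\) satisfies \(|k-\nu|\ge\delta\). After reducing \(\nu\) to \([-1/2,1/2]\), we also have \(|k-\nu|\ge |k|/2\) for \(|k|\ge1\). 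Using \(\max(a,b)^\zeta\ge\tfrac12(a^\zeta+b^\zeta)\), each term is at most \(C\exp\bigl(-\tfrac c2(2\pi N\delta)^\zeta\bigr)\exp\bigl(-\tfrac c2(\pi N|k|)^\zeta\bigr)\).
\item The remaining series over \(k\) is bounded uniformly in \(N\ge1\), which yields the claimed bound on \(|S_N(\nu)|\) with a smaller \(c\).
\end{itemize}
For \(N<N_0\), use the trivial bound \(|W_N^{(p,q)}(g)-a_0|\le\sum_\ell|a_\ell|\). Since \(\delta\le1/2\), enlarging \(C\) by the factor \(e^{cN_0^\zeta}\) covers this finite range.

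The step I expect to need the most care is the tail control in the Poisson sum. The bound must be uniform in \(\nu\) and must extract exactly the factor \(\exp(-c(\delta N)^\zeta)\) while keeping the remaining series summable. The splitting of the exponent above is what I would try first. I would also verify the endpoint bookkeeping, namely that \(n=N\) contributes \(w_{p,q}(1)=0\) so the truncated sum equals the full lattice sum.
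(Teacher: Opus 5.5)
Your proposal is correct and follows essentially the same route as the paper: reduce by linearity to the normalized kernel, rewrite the truncated sum as a full lattice sum using $w_{p,q}(1)=0$, apply Poisson summation, then invoke Lemma~\ref{lem:fourier-decay} together with $|k-\nu|\ge\delta$. Your version is more careful than the written proof in four places: you keep the correct scaling $\widehat w_{p,q}\bigl(2\pi N(k-\nu)\bigr)$, where the paper drops the $2\pi$; you split the exponent so the tail in $k$ is summable uniformly in $\nu$; you get $S_N(0)\ge N/2$ from the same Poisson identity rather than a Riemann-sum argument; and you absorb the finitely many small $N$ into the constant.
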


\begin{proof}
Since \(W_N^{(p,q)}\) is linear and satisfies \(W_N^{(p,q)}(1)=1\), it suffices to estimate
\[
K_N(\nu)
:=
\frac{\sum_{n=0}^{N-1}w_{p,q}(n/N)e^{2\pi i \nu n}}
{\sum_{n=0}^{N-1}w_{p,q}(n/N)}.
\]
Extend \(w_{p,q}\) to a function on \(\R\) by setting it equal to zero outside \((0,1)\). Since \(w_{p,q}(1)=0\), the finite sum may be rewritten as a sum over all lattice points:
\[
\sum_{n=0}^{N-1}w_{p,q}(n/N)e^{2\pi i \nu n}
=
\sum_{n\in\Z}w_{p,q}(n/N)e^{2\pi i \nu n}.
\]
Applying the Poisson summation formula, we obtain
\[
\sum_{n\in\Z}w_{p,q}(n/N)e^{2\pi i \nu n}
=
N\sum_{m\in\Z}\widehat w_{p,q}\!\bigl(N(\nu-m)\bigr).
\]
By Lemma~\ref{lem:fourier-decay},
\[
\left|\sum_{n=0}^{N-1}w_{p,q}(n/N)e^{2\pi i \nu n}\right|
\le
CN\sum_{m\in\Z}\exp\!\bigl(-cN^{\zeta(p,q)}|\nu-m|^{\zeta(p,q)}\bigr).
\]
If \(\dist(\nu,\Z)\ge \delta\), then \(|\nu-m|\ge \delta\) for every \(m\in\Z\). Therefore,
\[
\left|\sum_{n=0}^{N-1}w_{p,q}(n/N)e^{2\pi i \nu n}\right|
\le
C'N\exp\!\bigl(-c'(\delta N)^{\zeta(p,q)}\bigr).
\]

On the other hand, for the normalization factor
\[
A_N^{(p,q)}:=\sum_{n=0}^{N-1}w_{p,q}(n/N),
\]
the Riemann-sum approximation yields
\[
A_N^{(p,q)}=N+o(N)
\qquad (N\to\infty),
\]
since \(\int_0^1 w_{p,q}(x)\,dx=1\). Hence
\[
|K_N(\nu)|
\le
C''\exp\!\bigl(-c''(\delta N)^{\zeta(p,q)}\bigr).
\]
Summing over the finitely many nonzero frequencies in \eqref{eq:finite-trig}, we obtain
\[
\bigl|W_N^{(p,q)}(g)-a_0\bigr|
\le
\sum_{\ell=1}^M |a_\ell|\,|K_N(\nu_\ell)|
\le
C\exp\!\bigl(-c(\delta N)^{\zeta(p,q)}\bigr).
\]
This completes the proof.
\end{proof}

\begin{remark}\label{rem:practical}
Theorem~\ref{thm:finite-freq} provides an asymptotic convergence rate. For finite \(N\), however, the actually observed speed of convergence may also be influenced by the normalization factor \(A_N^{(p,q)}\), the constants arising from higher-order derivatives of the weight, the small-divisor structure, the Fourier coefficients of the observable, and floating-point precision. Therefore, although the theoretical exponent \(\zeta(p,q)\) increases with \(\min\{p,q\}\), the interpretation of numerical plots in concrete experiments should still take into account the structure of the model and the size of the observation window.
\end{remark}

\section{Explicit Integrable Model and Numerical Verification}

\subsection{A Explicit Quantum Integrable Model}\label{subsec:4.1}

On \((\C^2)^{\otimes 3}\), consider the Hamiltonian
\begin{equation}
H=\sum_{j=1}^{3}\omega_j \sigma_j^z,
\qquad
(\omega_1,\omega_2,\omega_3)=(1,\sqrt2,\sqrt3).
\label{eq:H-model}
\end{equation}
Since the operators \(\sigma_j^z\) commute pairwise, this is a non-interacting quantum integrable system.

We choose the initial state
\begin{equation}
\ket{\psi_0} = \ket{+}^{\otimes 3},
\qquad
\ket{+}=\frac{\ket{\uparrow}+\ket{\downarrow}}{\sqrt2},
\label{eq:psi0}
\end{equation}
and define the observable
\begin{equation}
A=\frac13\sum_{j=1}^{3}\sigma_j^x.
\label{eq:A-model}
\end{equation}
For discrete times \(n=0,1,2,\dots\), let
\begin{equation}
\ket{\psi_n}=e^{-inH}\ket{\psi_0},
\qquad
y_n:=\bra{\psi_n}A\ket{\psi_n}.
\label{eq:yn-def}
\end{equation}

\begin{proposition}
\label{prop:explicit-signal}
For the above model, one has
\begin{equation}
y_n=
\frac13\Bigl[\cos(2n)+\cos(2\sqrt2\,n)+\cos(2\sqrt3\,n)\Bigr].
\label{eq:explicit-signal}
\end{equation}
Moreover, the diagonal equilibrium state satisfies
\begin{equation}
\rhodiag=\frac18 I_8,
\qquad
\Aeq:=\Tr(\rhodiag A)=0.
\label{eq:Aeq-zero}
\end{equation}
\end{proposition}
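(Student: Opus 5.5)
The plan is to exploit the tensor-product structure of both the Hamiltonian and the initial state, so that everything reduces to single-spin computations. Writing \(H=\sum_j H_j\) with \(H_j=\omega_j\sigma_j^z\) acting on the \(j\)-th factor, the \(H_j\) commute, so
\[
e^{-inH}=\bigotimes_{j=1}^3 e^{-in\omega_j\sigma^z}
\qquad\text{and}\qquad
\ket{\psi_n}=\bigotimes_{j=1}^3 \ket{\phi_j(n)},
\]
where
\[
\ket{\phi_j(n)}=\tfrac{1}{\sqrt2}\bigl(e^{-in\omega_j}\ket{\uparrow}+e^{in\omega_j}\ket{\downarrow}\bigr).
\]
Since \(\sigma_j^x\) acts only on the \(j\)-th factor and the other factors are normalized, the expectation of \(\sigma_j^x\) in \(\ket{\psi_n}\) factorizes. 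It equals the single-spin value
\[
\bra{\phi_j(n)}\sigma^x\ket{\phi_j(n)}
=\tfrac12\bigl(e^{2in\omega_j}+e^{-2in\omega_j}\bigr)
=\cos(2\omega_j n).
\]
Averaging over \(j\) with \((\omega_1,\omega_2,\omega_3)=(1,\sqrt2,\sqrt3)\) gives \eqref{eq:explicit-signal}.

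For the diagonal state, I will work in the computational basis \(\ket{s_1s_2s_3}\) with \(s_j\in\{\uparrow,\downarrow\}\equiv\{+1,-1\}\). These vectors are eigenvectors of \(H\) with eigenvalues \(E_s=s_1+s_2\sqrt2+s_3\sqrt3\). The key step is to show that these \(8\) eigenvalues are pairwise distinct, i.e.\ that the spectrum is non-degenerate. Then each \(\Pi_\lambda\) is a rank-one projection \(\ket{s}\bra{s}\), and \(\rhodiag\) is the genuinely diagonal ensemble described after \eqref{eq:diag-state}.

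Two eigenvalues satisfy \(E_s-E_{s'}=2(\epsilon_1+\epsilon_2\sqrt2+\epsilon_3\sqrt3)\) with \(\epsilon_j\in\{0,\pm1\}\) not all zero. So non-degeneracy follows from the linear independence of \(1,\sqrt2,\sqrt3\) over \(\mathbb Q\). This is the only non-routine point, and I would prove it directly. Suppose \(a+b\sqrt2=c\sqrt3\) with rationals \(a,b,c\). Squaring gives \(a^2+2b^2+2ab\sqrt2=3c^2\), and irrationality of \(\sqrt2\) forces \(ab=0\). In the remaining cases \(a=c\sqrt3\) or \(b\sqrt2=c\sqrt3\), irrationality of \(\sqrt3\) and of \(\sqrt{3/2}\) forces all coefficients to vanish.

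Given non-degeneracy, the diagonal entries are \(|\langle s|\psi_0\rangle|^2=(1/\sqrt2)^{6}=1/8\) for every \(s\), since \(\ket{\psi_0}=\ket{+}^{\otimes3}\) has all amplitudes equal to \(2^{-3/2}\). Hence \(\rhodiag=\tfrac18 I_8\). Finally,
\[
\Tr(\rhodiag A)=\tfrac18\Tr A=\tfrac1{24}\sum_j \Tr(\sigma_j^x)=0,
\]
because each \(\sigma_j^x\) equals \(\sigma^x\otimes I\otimes I\) up to ordering and \(\sigma^x\) is traceless. As a consistency check, this agrees with the fact that the constant term \(a_0\) in the trigonometric expansion of \(y_n\) vanishes, as Theorem~\ref{thm:finite-freq} would require.
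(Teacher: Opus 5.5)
Your proof is correct and follows essentially the same route as the paper. You factorize over the three spins to get $\cos(2\omega_j n)$ for each $\sigma_j^x$ (computing in the Schr\"odinger picture, where the paper uses the Heisenberg conjugation formula), read off $\rhodiag=\frac18 I_8$ from the equal-modulus amplitudes of $\ket{+}^{\otimes 3}$, and use $\Tr A=0$. Your explicit check that the spectrum is non-degenerate, via the $\mathbb{Q}$-linear independence of $1,\sqrt2,\sqrt3$, is a step the paper uses only implicitly, but it is genuinely needed for $\rhodiag=\frac18 I_8$, since with degenerate levels the dephased state would keep off-diagonal entries inside the degenerate blocks.
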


\begin{proof}
For a single spin, one has the standard conjugation formula
\[
e^{in\omega \sigma^z}\sigma^x e^{-in\omega \sigma^z}
=
\cos(2\omega n)\sigma^x+\sin(2\omega n)\sigma^y.
\]
Therefore,
\[
\bra{+}e^{in\omega \sigma^z}\sigma^x e^{-in\omega \sigma^z}\ket{+}
=
\cos(2\omega n)\bra{+}\sigma^x\ket{+}
+ 
\sin(2\omega n)\bra{+}\sigma^y\ket{+}.
\]
Since
\[
\bra{+}\sigma^x\ket{+}=1,\qquad \bra{+}\sigma^y\ket{+}=0,
\]
it follows that
\[
\bra{+}e^{in\omega \sigma^z}\sigma^x e^{-in\omega \sigma^z}\ket{+}
=
\cos(2\omega n).
\]
Combining this with the linearity of \(A\) and the tensor-product structure, we obtain
\[
y_n=\frac13\sum_{j=1}^3 \cos(2\omega_j n),
\]
which is exactly \eqref{eq:explicit-signal}.

On the other hand, \(\ket{+}^{\otimes 3}\) has eight components of equal modulus in the standard \(\sigma^z\)-eigenbasis. After dephasing, only the diagonal entries remain, and hence
\[
\rhodiag=\frac18 I_8.
\]
Moreover, since \(A\) is the average of the operators \(\sigma_j^x\), each of which has zero trace, we have
\[
\Aeq=\Tr\!\left(\frac18 I_8 A\right)=\frac18\Tr(A)=0.
\]
This proves the proposition.
\end{proof}

\begin{corollary}
\label{cor:model-rate}
Define the unweighted average and the weighted average by
\begin{equation}
B_N:=\frac1N\sum_{n=0}^{N-1} y_n,
\qquad
W_N^{(p,q)}
:=
\frac{\sum_{n=0}^{N-1}w_{p,q}(n/N)\,y_n}
{\sum_{n=0}^{N-1}w_{p,q}(n/N)}.
\label{eq:BN-WN}
\end{equation}
Then
\[
W_N^{(p,q)}\to 0
\qquad (N\to\infty),
\]
and there exist constants \(c,C>0\) such that
\begin{equation}
|W_N^{(p,q)}|
\le
C\exp\!\bigl(-cN^{\zeta(p,q)}\bigr),
\qquad
\zeta(p,q)=\left(1+\frac{1}{\min\{p,q\}}\right)^{-1}.
\label{eq:WN-rate}
\end{equation}
\end{corollary}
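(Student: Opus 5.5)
The plan is to reduce the claim to Theorem~\ref{thm:finite-freq}. Using Proposition~\ref{prop:explicit-signal}, rewrite each cosine as a sum of two complex exponentials:
\[
y_n=\frac16\sum_{j=1}^3\bigl(e^{2\pi i\nu_j n}+e^{-2\pi i\nu_j n}\bigr),
\qquad \nu_j:=\frac{\omega_j}{\pi},\quad (\omega_1,\omega_2,\omega_3)=(1,\sqrt2,\sqrt3).
\]
This puts $y_n$ in the form \eqref{eq:finite-trig} with $M=6$, frequencies $\pm\nu_j$, coefficients $a_\ell=1/6$, and constant term $a_0=0$. The value $a_0=0$ agrees with $\Aeq=0$ from \eqref{eq:Aeq-zero}.

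The only hypothesis of Theorem~\ref{thm:finite-freq} that needs checking is the gap condition \eqref{eq:gap-dist}. Since $\dist(-\nu,\Z)=\dist(\nu,\Z)$, it suffices to show $\nu_j=\omega_j/\pi\notin\Z$ for $j=1,2,3$. Each $\omega_j$ is a nonzero algebraic number and $\pi$ is transcendental, so $\omega_j/\pi$ is irrational; in particular it is not an integer. Alternatively, the numerical bounds $1/\pi\approx0.318$, $\sqrt2/\pi\approx0.450$ and $\sqrt3/\pi\approx0.551$ all lie in $(0,1)$, which gives an explicit $\delta=\min_j\dist(\nu_j,\Z)\approx0.318>0$ with no appeal to irrationality. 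Since $M$ is finite, $\delta>0$ is a fixed constant of the model.

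Theorem~\ref{thm:finite-freq} then yields $|W_N^{(p,q)}-0|\le C\exp\bigl(-c(\delta N)^{\zeta(p,q)}\bigr)$. Absorbing $\delta^{\zeta(p,q)}$ into $c$ gives \eqref{eq:WN-rate}, and letting $N\to\infty$ gives $W_N^{(p,q)}\to0$. Both constants depend only on $p$, $q$ and the fixed model data.

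There is no real obstacle here; the only step requiring care is converting the angular frequencies $2\omega_j$ into the normalization $e^{2\pi i\nu n}$ used in \eqref{eq:finite-trig}, i.e.\ taking $\nu_j=\omega_j/\pi$ rather than $\omega_j$. If one also wanted to contrast with $B_N$, a direct geometric-sum computation would show that $B_N$ decays only like $O(1/N)$, since its coefficients have the form $(e^{2i\omega_j N}-1)/\bigl(N(e^{2i\omega_j}-1)\bigr)$. That comparison is not part of the stated claim.
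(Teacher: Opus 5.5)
Your proposal is correct and follows the same route as the paper: expand $y_n$ into six complex exponentials with constant term $a_0=0$ and apply Theorem~\ref{thm:finite-freq} directly. You are more careful than the paper's own proof, which neither rewrites the frequencies as $\nu_j=\omega_j/\pi$ to match the $e^{2\pi i\nu n}$ normalization nor checks the gap condition, whereas you verify explicitly that $\delta\approx 0.318>0$.
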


\begin{proof}
By Proposition~\ref{prop:explicit-signal},
\[
y_n=
\frac16 e^{2in}+\frac16 e^{-2in}
+\frac16 e^{2i\sqrt2\,n}+\frac16 e^{-2i\sqrt2\,n}
+\frac16 e^{2i\sqrt3\,n}+\frac16 e^{-2i\sqrt3\,n}.
\]
This is a finite-frequency trigonometric polynomial with constant term \(a_0=0\). Therefore, Theorem~\ref{thm:finite-freq} applies directly and yields \eqref{eq:WN-rate}. Since \(\Aeq=0\), the limiting value is zero.
\end{proof}

\subsection{Numerical Verification}

The purpose of this part is not to identify the limit numerically, since the limit has already been determined explicitly in the previous section. Rather, our aim is to demonstrate that the weighted averages approach the dephased equilibrium value faster than the ordinary averages, and that, for the model under consideration, larger values of \(\min\{p,q\}\) lead to faster convergence in practice.

For the three-spin model considered in Subsection \ref{subsec:4.1}, we have
\begin{equation}
y_n=
\frac13\Bigl[\cos(2n)+\cos(2\sqrt2\,n)+\cos(2\sqrt3\,n)\Bigr],
\qquad
\Aeq=0.
\label{eq:num-signal}
\end{equation}
We compare the ordinary average
\begin{equation}
B_N=\frac1N\sum_{n=0}^{N-1}y_n
\label{eq:num-BN}
\end{equation}
with the weighted average
\begin{equation}
W_N^{(p,q)}
=
\frac{\sum_{n=0}^{N-1}w_{p,q}(n/N)\,y_n}
{\sum_{n=0}^{N-1}w_{p,q}(n/N)}.
\label{eq:num-WN}
\end{equation}
The parameter pairs are chosen as
\[
(p,q)\in\{(0.5,0.5),(1,1),(2,2),(4,4)\}.
\]
We define the errors by
\begin{equation}
E_N^{\mathrm{unw}}:=|B_N-\Aeq|=|B_N|,
\qquad
E_N^{(p,q)}:=|W_N^{(p,q)}-\Aeq|=|W_N^{(p,q)}|.
\label{eq:num-errors}
\end{equation}
To distinguish more stably the numerical advantage of the case \((4,4)\), the computations were carried out using high-precision arithmetic.

\begin{figure}[H]
    \centering
    \safeincludegraphics[width=0.72\textwidth]{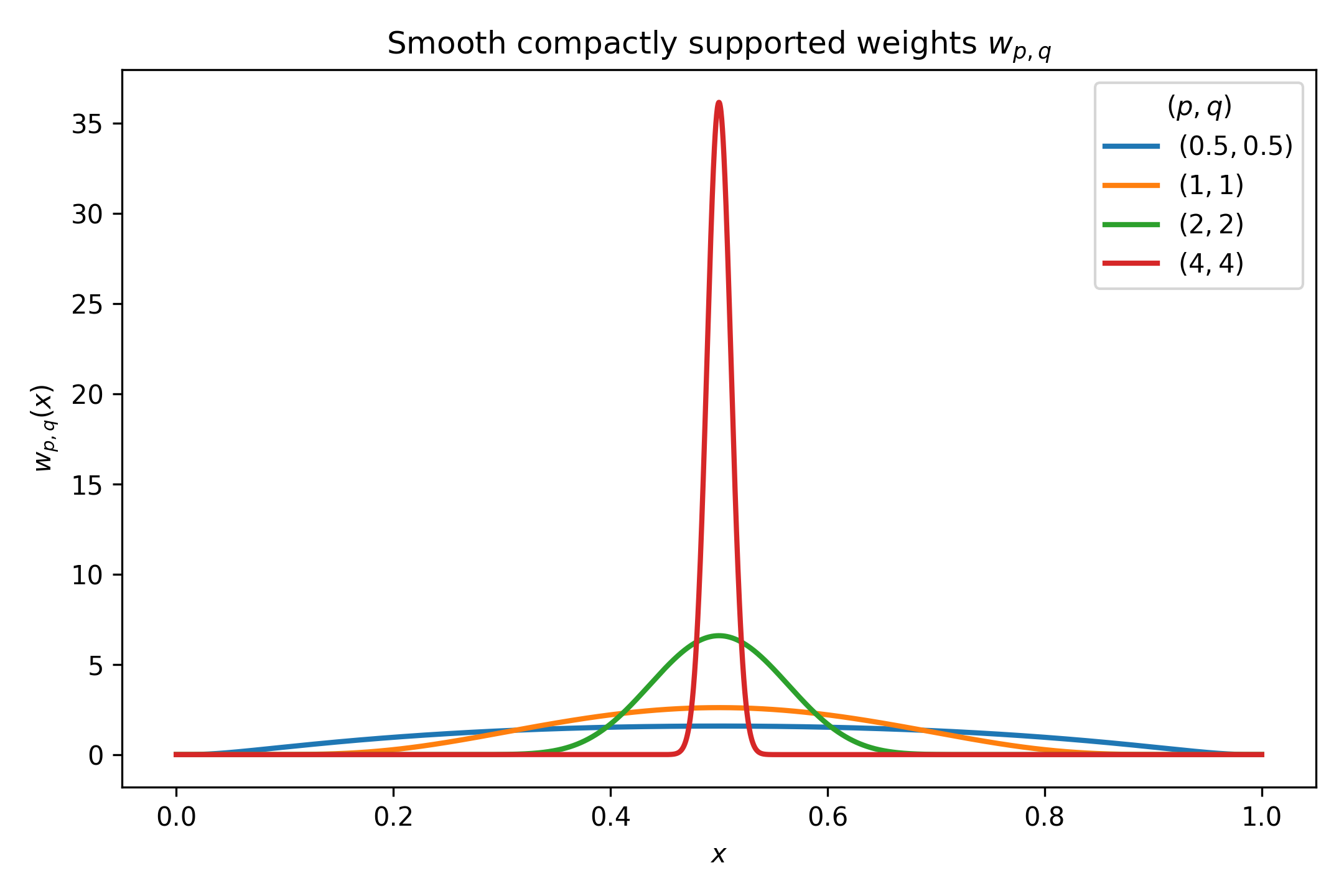}
    \caption{Compactly supported weight functions \(w_{p,q}\) corresponding to the four parameter pairs \((p,q)=(0.5,0.5),(1,1),(2,2),(4,4)\). As \(\min\{p,q\}\) increases, the weight becomes increasingly concentrated in the middle portion of the time window and suppresses the endpoint data more strongly.}
    \label{fig:weight}
\end{figure}

Figure~\ref{fig:weight} displays the weight functions \(w_{p,q}\) corresponding to the four parameter pairs. As \(\min\{p,q\}\) increases, the weight becomes more concentrated in the middle of the time window and suppresses the data near the two endpoints more strongly. This is precisely the basic geometric mechanism by which weighted averaging reduces endpoint errors over a finite observation window.

Figure~\ref{fig:running} illustrates the behavior of the running averages. To make the process of approaching equilibrium more visually transparent, we restrict this figure to the intermediate window \(40\le N\le 400\). In Figure~\ref{fig:running} (when generated by the present code, the two vertically stacked panels are included in a single figure), the upper panel shows the unweighted average together with the four weighted averages themselves, while the lower panel displays the signed deviation from the equilibrium value \(0\) on a symlog vertical axis. One can see that the unweighted average still exhibits a noticeable drift, whereas the weighted averages approach the zero line more rapidly. Moreover, larger values of \((p,q)\) typically enter a smaller deviation scale at an earlier stage.

Figure~\ref{fig:error} presents semilogarithmic decay plots of the errors. Here we focus on the later asymptotic window \(650\le N\le 1200\), in order to distinguish more clearly the hierarchy of convergence rates corresponding to different parameter choices. The figure shows that the weighted averages significantly outperform the unweighted average. Moreover, under the present choice of model, observation window, and high-precision computation, one can clearly observe the overall ordering
\[
E_N^{(4,4)} < E_N^{(2,2)} < E_N^{(1,1)} < E_N^{(0.5,0.5)} < E_N^{\mathrm{unw}}.
\]
This is consistent with the theoretical conclusion of Theorem~\ref{thm:finite-freq}, namely, that the exponent \(\zeta(p,q)\) increases with \(\min\{p,q\}\).

\begin{figure}[H]
    \centering
    \safeincludegraphics[width=0.82\textwidth]{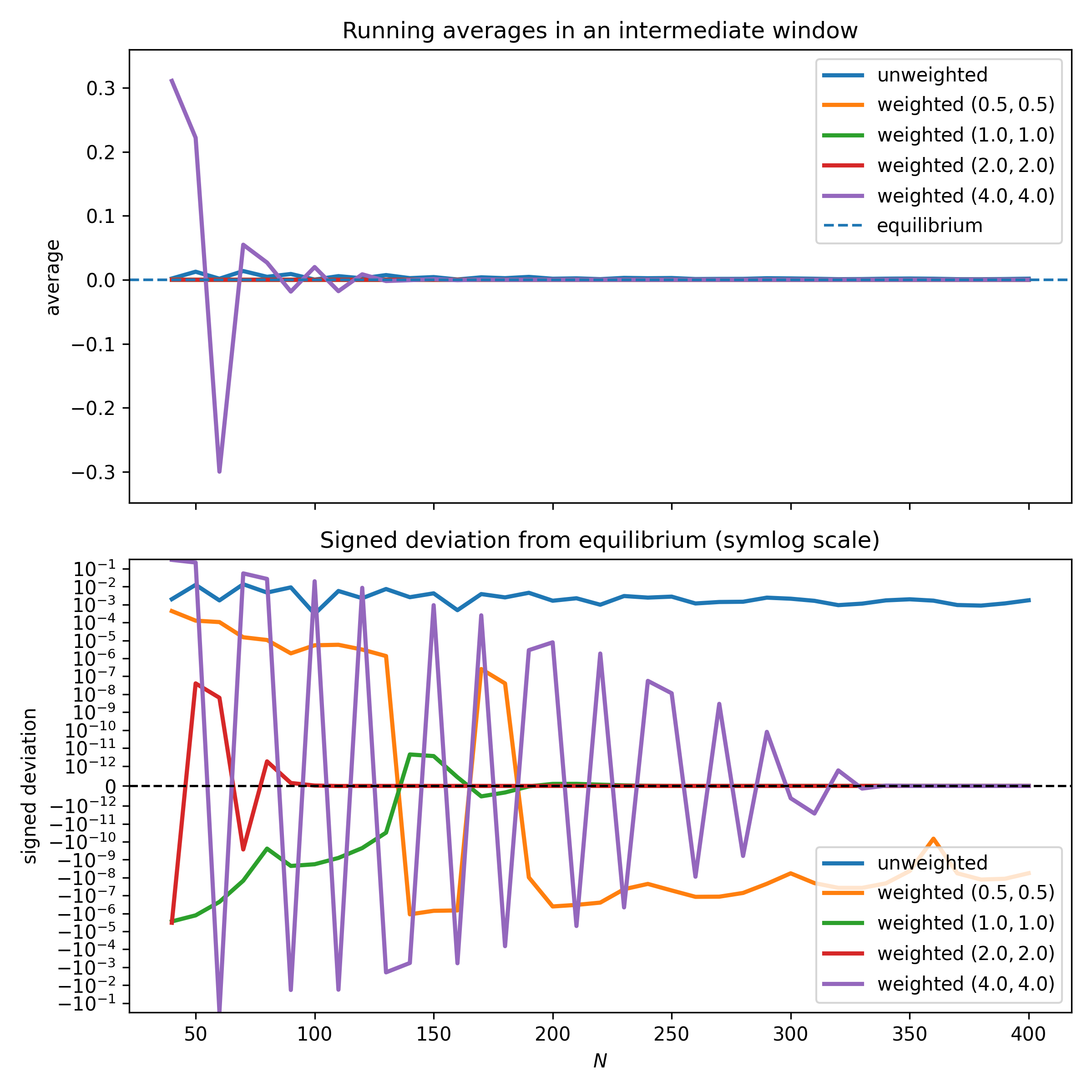}
    \caption{Numerical behavior of the running averages. The upper panel shows the unweighted average and the four weighted averages over the intermediate window \(40\le N\le 400\); the lower panel displays the signed deviation from the equilibrium value \(0\) on a symlog vertical axis. The purpose of this figure is to highlight the \emph{process} of approaching equilibrium, rather than the final asymptotic error.}
    \label{fig:running}
\end{figure}

Finally, Figure~\ref{fig:stretched} provides a linearized test of the stretched-exponential decay law. For each parameter pair, we plot
\[
\log_{10} E_N^{(p,q)}
\quad \text{against} \quad
N^{\zeta(p,q)},
\]
where
\[
\zeta(0.5,0.5)=\frac13,\qquad
\zeta(1,1)=\frac12,\qquad
\zeta(2,2)=\frac23,\qquad
\zeta(4,4)=\frac45.
\]
If the error satisfies
\[
E_N^{(p,q)}\approx \exp(-cN^{\zeta(p,q)}),
\]
then the corresponding plot should be approximately linear. The numerical results show that such a linear trend is clearly visible, thereby supporting the stretched-exponential convergence law established in this paper.

\begin{figure}[H]
    \centering
    \safeincludegraphics[width=0.75\textwidth]{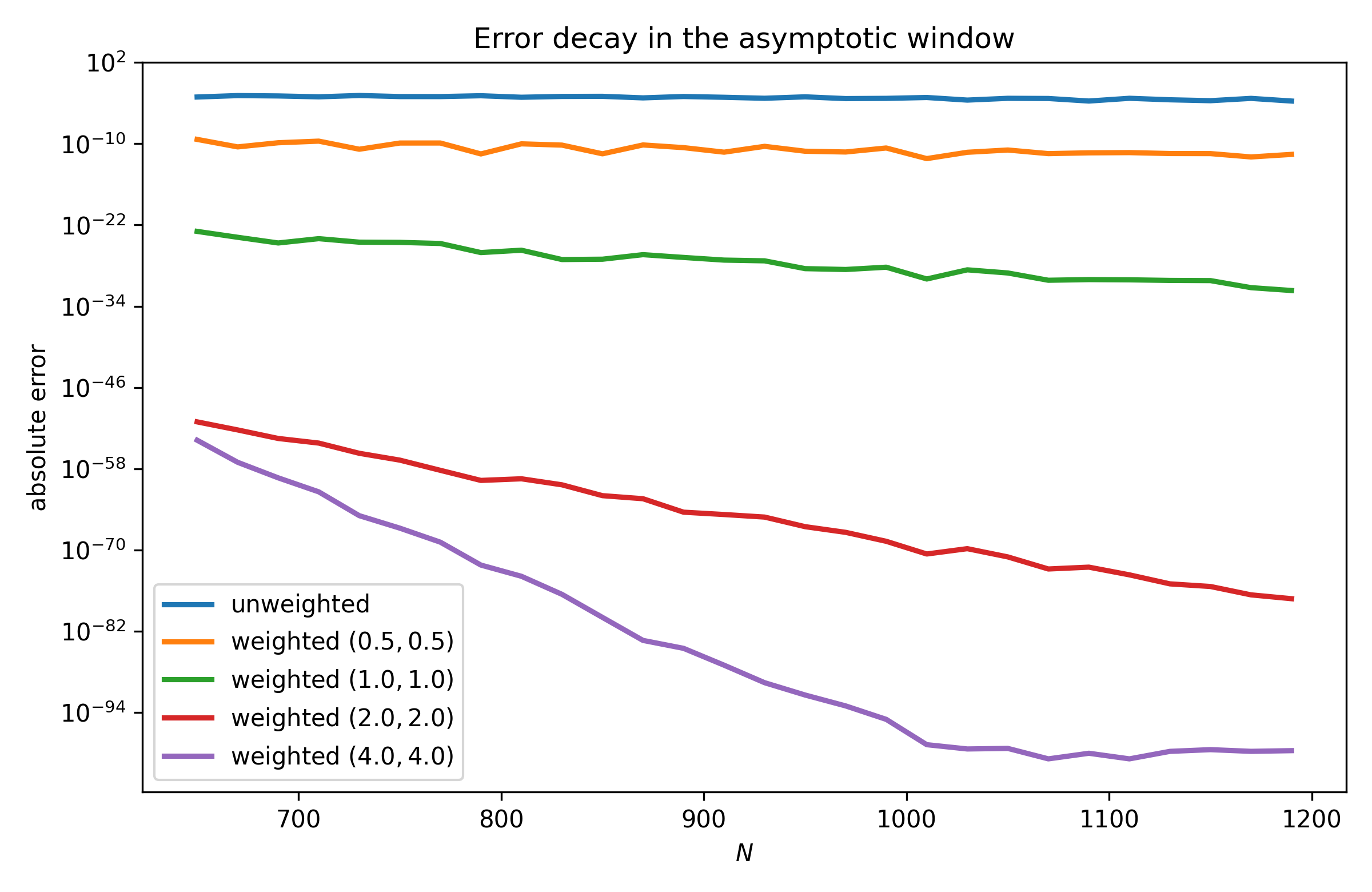}
    \caption{Semilogarithmic decay plots of the errors \(E_N^{\mathrm{unw}}\) and \(E_N^{(p,q)}\) over the asymptotic window \(650\le N\le 1200\). The weighted averages clearly outperform the unweighted average; for the present model, larger values of \(\min\{p,q\}\) correspond to faster error decay.}
    \label{fig:error}
\end{figure}

\begin{figure}[H]
    \centering
    \safeincludegraphics[width=0.75\textwidth]{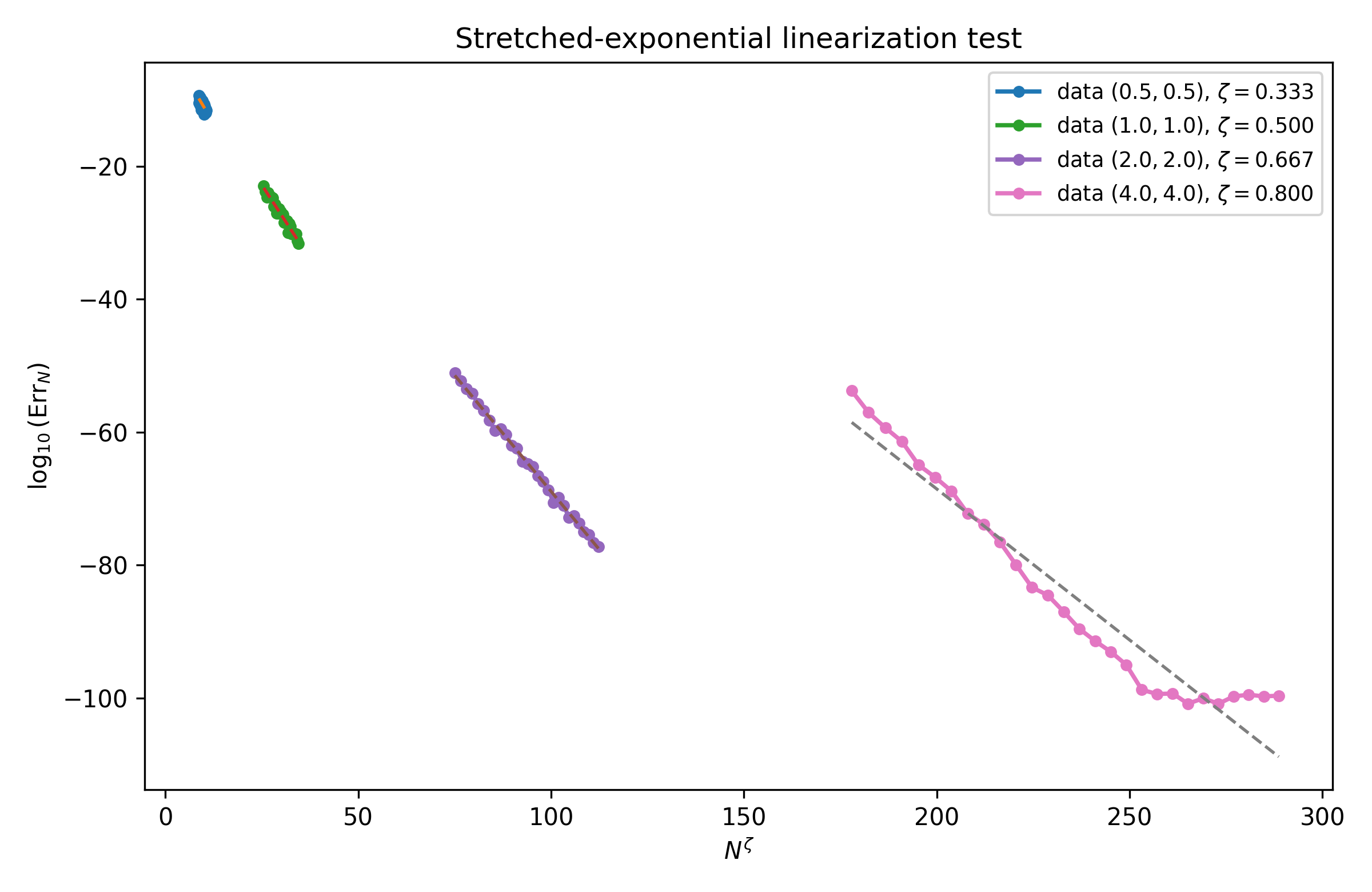}
    \caption{Linearized test of the stretched-exponential convergence law. For each pair \((p,q)\), we plot \(\log_{10}E_N^{(p,q)}\) against \(N^{\zeta(p,q)}\), where \(\zeta(p,q)=(1+1/\min\{p,q\})^{-1}\). The data points exhibit an approximately linear trend overall, thereby supporting the theoretical prediction \(E_N^{(p,q)}\approx \exp(-cN^{\zeta(p,q)})\).}
    \label{fig:stretched}
\end{figure}

In summary, this three-spin quantum integrable model provides a particularly transparent numerical example: single-time observables remain quasiperiodically oscillatory and do not converge pointwise to equilibrium, whereas their weighted time averages rapidly approach the dephased equilibrium value. Moreover, under the chosen observation window and high-precision implementation, larger values of \(\min\{p,q\}\) correspond to faster convergence.

%This section will present the weighted time-averaging procedure, the definition of the diagonal equilibrium state, and the main weak convergence theorem.

\section{Conclusion}\label{sec5}

In this paper, we introduced a weighted time-averaging framework for quantum systems with pure point spectrum and used it to formulate a natural quantum counterpart of weak convergence to equilibrium in integrable dynamics. Since the unitary evolution in this setting typically produces quasiperiodic expectation values, one cannot in general expect pointwise convergence of single-time observables as $t\to\infty$. The appropriate asymptotic object is therefore not a pointwise limit, but a long-time averaged equilibrium state. Our analysis shows that this role is naturally played by the diagonal (dephased) state, which removes oscillatory couplings between distinct energy subspaces while preserving the block structure associated with spectral degeneracies.

At the general level, we proved that the continuous weighted time average of the expectation of any bounded observable satisfying a suitable absolute summability condition converges to the expectation taken with respect to the dephased equilibrium state. We also reformulated this result at the level of averaged density operators and obtained weak convergence of the weighted averaged states on the corresponding class of observables. In this way, the paper establishes a rigorous weak equilibration principle for the class of quantum systems considered here.

To address the convergence speed, we then studied quasiperiodic signals with finitely many frequencies and derived an explicit quantitative estimate for the discrete weighted averages. More precisely, we showed that the convergence is of stretched-exponential type, with exponent
\[
\zeta(p,q)=\left(1+\frac{1}{\min\{p,q\}}\right)^{-1}.
\]
This shows that the weighted averaging procedure does not merely recover the equilibrium value, but also provides a genuine acceleration mechanism relative to ordinary averaging. In particular, the dependence of the exponent on $\min\{p,q\}$ makes precise how stronger suppression of endpoint contributions improves the averaging effect.

Finally, we applied the general theory to an explicit three-spin quantum integrable model. In this example, the observable dynamics can be computed exactly, the dephased equilibrium state is identified explicitly, and the numerical simulations are in clear agreement with the theoretical predictions. Although the single-time signal remains quasiperiodic and does not converge pointwise, its weighted averages approach the equilibrium value much faster than the ordinary averages. For the observation windows considered in this paper, larger values of $\min\{p,q\}$ also lead to better numerical performance, in accordance with the theoretical rate.

Overall, the results of this paper show that weighted time averages provide a natural and effective framework for describing equilibration in quantum systems with pure point spectrum. They connect oscillatory microscopic dynamics with a well-defined ensemble-level equilibrium object and, at the same time, yield a quantitative improvement over standard averaging methods. 

Possible directions for future research include weakening the summability assumptions, extending the analysis to more general almost-periodic frequency structures and infinite-dimensional models, and studying how the present dephasing mechanism interacts with nearly integrable perturbations.

%We have formulated a weighted time-average approach for quantum integrable systems with pure point spectrum and identified the diagonal ensemble as the natural equilibrium object. The detailed proofs and model computations will be developed in the subsequent version of this manuscript.

%\section*{Acknowledgements}

%Not applicable.

\section*{Declarations}

%\textbf{Funding}

%Not applicable.

%\textbf{Conflict of interest}

The author declares that there is no conflict of interest.

\section*{Acknowledgements}
The author Yong Li was supported by National Natural Science Foundation of China
(12071175, 12471183 and 12531009).

%\textbf{Ethics approval and consent to participate}

%Not applicable.

%\textbf{Consent for publication}

%Not applicable.

%\textbf{Data availability}

%No datasets were generated or analysed in this study.

%\textbf{Materials availability}

%Not applicable.

%\textbf{Code availability}

%Not applicable.

%\textbf{Author contribution}

%Xinyu Liu conceived the study, carried out the analysis, and wrote the manuscript.

% Temporary: bibliography is disabled until actual references are added and
% the Springer Nature bibliography style file (for example,
% sn-mathphys-num.bst) is available in the project.
%% if required, the content of .bbl file can be included here once bbl is generated
%%\input sn-article.bbl

\end{document}